\begin{document}
\newtheorem{maintheorem}[theorem]{Main Theorem}
\newtheorem{sublemma}{Lemma}[theorem]
\renewcommand{\th}{{\hbox{\scriptsize th}}}
\def\<#1>{\langle#1\rangle}
\newcommand{\Z}{{\mathbb Z}}
\newcommand{\df}{\it}
\newcommand{\set}[1]{\{\,{#1}\,\}}
\newcommand{\text}[1]{\hbox{\rm #1}}
\newcommand{\st}{\mid}
\newcommand{\QEDbox}{\fbox{}}
\newcommand{\OneMove}{\mathop{\rm OneMove}}
\newcommand{\NoKing}{\mathop{\rm NoKing}}
\newcommand{\WhiteMated}{\mathop{\rm WhiteMated}}
\newcommand{\BlackMated}{\mathop{\rm BlackMated}}
\newcommand{\WhiteStalemated}{\mathop{\rm WhiteStalemated}}
\newcommand{\BlackStalemated}{\mathop{\rm BlackStalemated}}
\newcommand{\Ch}{\mathord{\frak{Ch}}}
\newcommand{\WhiteWins}{\mathop{\rm WhiteWins}\nolimits}
\newcommand{\BlackWins}{\mathop{\rm BlackWins}\nolimits}
\newcommand{\Wins}{\mathop{\rm Wins}\nolimits}
\newcommand{\WhiteToPlay}{\mathop{\rm WhiteToPlay}}
\newcommand{\BlackToPlay}{\mathop{\rm BlackToPlay}}
\newcommand{\WhiteInCheck}{\mathop{\rm WhiteInCheck}}
\newcommand{\BlackInCheck}{\mathop{\rm BlackInCheck}}
\newcommand{\Legal}{\mathop{\rm Legal}}
\newcommand{\WhiteMove}{\mathop{\rm WhiteMove}}
\newcommand{\BlackMove}{\mathop{\rm BlackMove}}
\newcommand{\Attack}{\mathop{\rm Attack}}
\newcommand{\Piece}{\mathop{\rm Piece}}
\newcommand{\Move}{\mathop{\rm Move}}

\title{The mate-in-n problem of infinite chess is decidable}
\titlerunning{Mate-in-n for infinite chess is decidable}  
%
%
%
\author{Dan Brumleve\inst{1}
Joel David Hamkins\inst{2}\thanks{Research has been supported in part by grants from the National Science Foundation, the Simons Foundation
and the CUNY Research Foundation.} \and Philipp Schlicht\inst{3}}
\authorrunning{Hamkins and Schlicht} 
\institute{Topsy Labs, Inc.\\
\and New York University\\
Department of Philosophy, 5 Washington Place, New York, NY 10003\\
{\it \&} The City University of New York\\
Mathematics, CUNY Graduate Center, 365 Fifth Avenue, New York, NY 10016\\
Mathematics, College of Staten Island of CUNY, Staten Island, NY 10314\\
\email{jhamkins@gc.cuny.edu}, \texttt{http://jdh.hamkins.org} \and
Universit\"at Bonn\\
Mathematisches Institut, Endenicher Allee 60, 53115 Bonn, Germany\\
\email{schlicht@math.uni-bonn.de} }\maketitle
\begin{abstract}
The mate-in-$n$ problem of infinite chess---chess played on an infinite edgeless board---is the problem of determining whether a designated
player can force a win from a given finite position in at most $n$ moves. Although a straightforward formulation of this problem leads to
assertions of high arithmetic complexity, with $2n$ alternating quantifiers,  the main theorem of this article nevertheless confirms a
conjecture of the second author and C. D. A. Evans by establishing that it is computably decidable, uniformly in the position and in $n$.
Furthermore, there is a computable strategy for optimal play from such mate-in-$n$ positions. The proof proceeds by showing that the
mate-in-$n$ problem is expressible in what we call the first-order structure of chess $\Ch$, which we prove (in the relevant fragment) is
an automatic structure, whose theory is therefore decidable. The structure is also definable in Presburger arithmetic. Unfortunately, this
resolution of the mate-in-$n$ problem does not appear to settle the decidability of the more general winning-position problem, the problem
of determining whether a designated player has a winning strategy from a given position, since a position may admit a winning strategy
without any bound on the number of moves required. This issue is connected with transfinite game values in infinite chess, and the exact
value of the omega one of chess $\omega_1^{\rm chess}$ is not known.
\end{abstract}

Infinite chess is chess played on an infinite edgeless chess board, arranged like the integer lattice $\Z\times\Z$. The familiar chess
pieces---kings, queens, bishops, knights, rooks and pawns---move about according to their usual chess rules, with bishops on diagonals,
rooks on ranks and files and so on, with each player striving to place the opposing king into checkmate. There is no standard starting
configuration in infinite chess, but rather a game proceeds by setting up a particular position on the board and then playing from that
position. In this article, we shall consider only finite positions, with finitely many pieces; nevertheless, the game is sensible for
positions with infinitely many pieces. We came to the problem through Richard Stanley's question on mathoverflow.net
\cite{MO27967Stanley:DecidabilityOfInfiniteChess}.

The {\df mate-in-$n$} problem of infinite chess is the problem of determining for a given finite position whether a designated player can
force a win from that position in at most $n$ moves, counting the moves only of the designated player. For example, figure
\ref{Figure.MateIn12} exhibits an instance of the mate-in-$12$ problem, adapted from a position appearing in
\cite{EvansHamkinsWoodin:TransfiniteGameValuesInInfiniteChess}. We provide a solution at the article's end.
\begin{figure}[h]
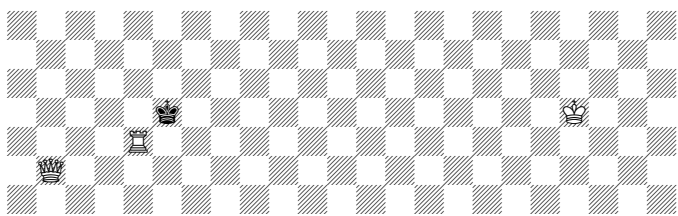
\hskip.8in\chessboard[maxfield=w7,
            boardfontsize=11pt,
            label=false,
            showmover=false,
            borderleft=false,
            borderright=false,
            bordertop=false,
            borderbottom=false,
            margin=false,
            setfen=%
/%
/%
/%
5k13K/%
4R/%
1Q/%
/%
]%
\caption{White to move on an infinite, edgeless board. Can white force mate in 12 moves?}\label{Figure.MateIn12}%
\end{figure}\removelastskip

A naive formulation of the mate-in-$n$ problem yields arithmetical assertions of high arithmetic complexity, with $2n$ alternating
quantifiers: {\it there is a white move, such that for any black reply, there is a counter-play by white}, and so on. In such a
formulation, the problem does not appear to be decidable. One cannot expect to search an infinitely branching game tree even to finite
depth. Nevertheless, the second author of this paper and C. D. A. Evans conjectured that it should be decidable anyway, after observing that
this was the case for small values of $n$, and the main theorem of this paper is to confirm this conjecture.

Before proceeding, let us clarify a few of the rules as they relate to infinite chess. A position should have at most one king of each
color. There is no rule for pawn promotion, as there is no boundary for the pawns to attain. In infinite chess, we abandon as limiting the
50-move rule (asserting that 50 moves without a capture or pawn movement is a draw). A play of the game with infinitely many moves is a
draw. We may therefore abandon the three-fold repetition rule, since any repetition could be repeated endlessly and thus attain a draw, if
both players desire this, and if not, then it needn't have been repeated. This does not affect the answer to any instance of the
mate-in-$n$ problem, because if the opposing player can avoid mate-in-$n$ only because of a repetition, then this is optimal play anyway.
Since we have no official starting rank of pawns, we also abandon the usual initial two-step pawn movement rule and the accompanying {\it
en passant} rule. Similarly, there is no castling in infinite chess.

Chess on arbitrarily large finite boards has been studied, for example in
\cite{FraenkelLichtenstein1981:ComputingPerfecttStrategyForNxNChess}, but the winning positions of infinite chess are not simply those that
win on all sufficiently large finite boards. For example, two connected white rooks can checkmate a black king on any large finite board,
but these pieces have no checkmate position on an infinite board. The lack of edges in infinite chess seems to make it fundamentally
different from large finite chess, although there may be a link between large finite chess and infinite chess on a quarter-infinite board.

Since infinite chess proceeds from an arbitrary position, there are a few additional weird boundary cases that do not arise in ordinary
chess. For example, we may consider positions in which one of the players has no king; such a player could never herself be checkmated, but
may still hope to checkmate her opponent, if her opponent should have a king. We shall not consider positions in which a player has two or
more kings, although one might adopt rules to cover this case. In ordinary chess, one may construe the checkmate winning condition in terms
of the necessary possibility of capturing the king. That is, we may imagine a version of chess in which the goal is simply to capture the
opposing king, with the additional rules that one must do so if possible and one must prevent this possibility for the opponent on the next
move, if this is possible. Such a conception of the game explains why we regard it as illegal to leave one's own king in check and why one
may still checkmate one's opponent with pieces that are pinned to one's own king,\footnote{Even in ordinary finite chess one might argue
that a checkmate position with pinned pieces should count merely as a draw, because once the opposing king is captured, one's own king will
be killed immediately after; but we do not advance this argument.} and gives rise to exactly the same outcomes for the positions of
ordinary chess, with the extra king-capture move simply omitted. This conception directs the resolution of certain other initial positions:
for example, a position with white to move and black in check is already won for white, even if white is in checkmate.

The {\df stalemate-in-$n$} problem is the problem of determining, for a given finite position whether a designated player can force a
stalemate (or a win) in at most $n$ moves, meaning a position from which the player to move has no legal moves. The {\df
draw-in-$n$-by-$k$-repetition} problem is the problem of determining for a given finite position whether a designated player can in at most
$n$ moves force a win or force the opponent into a family of $k$ positions, inside of which he may force his opponent perpetually to
remain. The {\df winning-position} problem is the problem of determining whether a designated player has a winning strategy from a given
finite position. The {\df drawn-position} problem is the problem of determining whether a designated player may force a draw or win from a
given finite position.

\begin{maintheorem}\label{Theorem.MainTheorem}
The mate-in-$n$ problem of infinite chess is decidable. 
\begin{enumerate}%
 \item Moreover, there is a computable strategy for optimal play from a mate-in-$n$ position to achieve the win in the fewest number of
     moves.
 \item Similarly, there is a computable strategy for optimal opposing play from a mate-in-$n$ position, to delay checkmate as long as
     possible. Indeed, there is a computable strategy to enable any player to avoid checkmate for $k$ moves from a given position, if
     this is possible.
 \item In addition, the stalemate-in-$n$ and draw-in-$n$-by-$k$-repetition problems are also decidable, with computable strategies
     giving optimal play.
\end{enumerate}
\end{maintheorem}

Allow us briefly to outline our proof method. We shall describe a certain first order structure $\Ch$, the structure of chess, whose
objects consist of all the various legal finite positions of infinite chess, with various predicates to indicate whether a position shows a
king in check or whether one position is reachable from another by a legal move. The mate-in-$n$ problem is easily expressed as a
$\Sigma_{2n}\vee\Pi_{2n}$ assertion in the first-order language of this structure, the language of chess. In general, of course, there is
little reason to expect such complicated assertions in a first-order structure to be decidable, as generally even the $\Sigma_1$ assertions
about an infinite computable structure are merely computably enumerable and not necessarily decidable. To surmount this fundamental
difficulty, the key idea of our argument is to prove that for any fixed finite collection $A$ of pieces, the reduct substructure $\Ch_A$ of
positions using only at most the pieces of $A$ is not only computable, but in the restricted language of chess is an {\df automatic}
structure, meaning that the domain of objects can be represented as strings forming a regular language, with the predicates of the
restricted language of chess also being regular. Furthermore, the mate-in-$n$ problem for a given position has the same truth value in
$\Ch$ as it does in $\Ch_A$ and is expressible in the restricted language of chess. We may then appeal to the decidability theorem of
automatic structures \cite{KhoussainovNerode1995:AutomaticPresentationsOfStructures,BlumensathGradel2000:AutomaticStructures}, which
asserts that the first order theory of any automatic structure is decidable, to conclude that the mate-in-$n$ problem is decidable. An
alternative proof proceeds by arguing that $\Ch_A$ is interpretable in Presburger arithmetic $\<\mathbb{N},+>$, and this also implies that
the theory of chess with pieces from $A$ is decidable. The same argument methods apply to many other decision problems of infinite chess.
Despite this, the method does not seem to generalize to positions of infinite game value---positions from which white can force a win, but
which have no uniform bound on the number of moves it will take---and as a result, it remains open whether the general winning-position
problem of infinite chess is decidable. Indeed, we do not know even whether the winning-position problem is arithmetic or even whether it
is hyperarithmetic.

One might begin to see that the mate-in-$n$ problem is decidable by observing that for mate-in-$1$, one needn't search through all possible
moves, because if a distant move by a long-range piece gives checkmate, then all sufficiently distant similar moves by that piece also give
checkmate. Ultimately, the key aspects of chess on which our argument relies are: (i) no new pieces enter the board during play, and (ii)
the distance pieces---bishops, rooks and queens---move on straight lines whose equations can be expressed using only addition. Thus, the
structure of chess is closer to Presburger than to Peano arithmetic, and this ultimately is what allows the theory to be decidable.

Let us now describe the first-order structure of chess $\Ch$. Informally, the objects of this structure are all the various finite
positions of infinite chess, each containing all the information necessary to set-up and commence play with an instance of infinite chess,
namely, a finite listing of pieces, their types, their locations and whether they are still in play or captured, and an indication of whose
turn it is. Specifically, define that a {\df piece designation} is a tuple $\<i,j,x,y>$, where $i\in \{{\rm K,k,Q,q,B,b,N,n,R,r,P,p}\}$ is
the piece type, standing for king, queen, bishop, knight, rook or pawn, with upper case for white and lower case for black; $j$ is a binary
indicator of whether the piece is in play or captured; and $(x,y)\in\Z\times\Z$ are the coordinates of the location of the piece on the
chessboard (or a default value when the piece is captured). A finite {\df position} is simply a finite sequence of piece designations,
containing at most one king of each color and with no two live pieces occupying the same square on the board, together with a binary
indicator of whose turn it is to play next.
One could easily define an equivalence relation on the positions for when they correspond to the same set-up on the board---for example,
extra captured bishops do not matter, and neither does permuting the piece designations---but we shall actually have no need for that
quotient. Let us denote by $\text{Ch}$ the set of all finite positions of infinite chess. This is the domain of what we call the structure
of chess $\Ch$.

We shall next place predicates and relations on this structure in order to indicate various chess features of the positions. For example,
$\WhiteToPlay(p)$ holds when position $p$ indicates that it is white's turn, and similarly for $\BlackToPlay(p)$. The relation
$\OneMove(p,q)$ holds when there is a legal move transforming position $p$ into position $q$. We adopt the pedantic formalism for this
relation that the representation of the pieces in $p$ and $q$ is respected: the order of listing the pieces is preserved and captured
pieces do not disappear, but are marked as captured. The relation $\BlackInCheck(p)$ holds when $p$ shows the black king to be in check,
and similarly for $\WhiteInCheck(p)$. We define $\BlackMated(p)$ to hold when it is black's turn to play in $p$, black is in check, but
black has no legal move; the dual relation $\WhiteMated(p)$ when it is white to play, white is in check, but has no legal move. Similarly,
$\BlackStalemated(p)$ holds when it is black's turn to play, black is not in check, but black has no legal move; and similarly for the dual
$\WhiteStalemated(p)$. The {\df structure of chess} $\Ch$ is the first-order structure with domain $\text{Ch}$ and with all the relations
we have mentioned here. The language is partly redundant, in that several of the predicates are definable from the others, so let us refer
to the language with only the relations $\WhiteToPlay$, $\OneMove$, $\BlackInCheck$ and $\WhiteInCheck$ as the restricted language of
chess. Later, we shall also consider expansions of the language.

Since the $\OneMove(p,q)$ relation respects the order in which the pieces are enumerated, the structure of chess $\Ch$ naturally breaks
into the disjoint components $\Ch_A$, consisting of those positions whose pieces come from a {\df piece specification} type $A$, that is, a
finite list of chess-piece types. For example, $\Ch_{\rm KQQkb}$ consists of the chess positions corresponding to the white king and two
queens versus black king and one bishop problems, enumerated in the {\rm KQQkb} order, with perhaps some of these pieces already captured.
Since there is no pawn promotion in infinite chess or any other way to introduce new pieces to the board during play, any game of infinite
chess beginning from a position with piece specification $A$ continues to have piece specification $A$, and so chess questions about a
position $p$ with type $A$ are answerable in the substructure $\Ch_A$. We consider the structure $\Ch_A$ to have only the restricted
language of chess, and so it is a reduct substructure rather than a substructure of $\Ch$.

We claim that the mate-in-$n$ problem of infinite chess is expressible in the structure of chess $\Ch$. Specifically, for any finite list
$A$ of chess-piece types, there are assertions $\WhiteWins_n(p)$ and $\BlackWins_n(p)$ in the restricted language of chess, such that for
any position $p$ of type $A$, the assertions are true in $\Ch_A$ if and only if that player has a strategy to force a win from position $p$
in at most $n$ moves. This can be seen by a simple inductive argument. For the $n=0$ case, and using the boundary-case conventions we
mentioned earlier, we define $\WhiteWins_0(p)$ if it is black to play, black is in checkmate and white is not in check, or it is white to
play and black is in check. Next, we recursively define $\WhiteWins_{n+1}(p)$ if either white can win in $n$ moves, or it is white's turn
to play and white can play to a position from which white can win in at most $n$ moves, or it is black's turn to play and black indeed has
a move (so it is not stalemate), but no matter how black plays, white can play to a position from which white can win in at most $n$ moves.
It is clear by induction that these assertions exactly express the required winning conditions and have complexity
$\Sigma_{2n}\vee\Pi_{2n}$ in the language of chess (since perhaps the position has the opposing player going first), or complexity
$\Sigma_{2n+1}\vee\Pi_{2n+1}$ in the restricted language of chess, since to define the checkmate condition from the in-check relation adds
an additional quantifier.

\begin{lemma}\label{Lemma.Ch_AIsAutomatic}
For any finite list $A$ of chess-piece types, the structure $\Ch_A$ is automatic.
\end{lemma}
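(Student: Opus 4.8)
The plan is to exhibit an automatic presentation of $\Ch_A$ directly, by encoding positions as strings over a finite alphabet and checking that the domain and each relation of the restricted language are recognized by finite automata reading their arguments synchronously. The essential observation is that, because $A$ is a fixed finite list, every position in $\Ch_A$ has a fixed number $m=|A|$ of pieces, each carrying only a captured/in-play bit together with coordinates $(x,y)\in\Z\times\Z$, plus a single bit recording whose turn it is. Thus a position is determined by the $m$ control bits, the turn bit, and $2m$ integers. I would encode each integer in a signed binary notation and present a position as a single string whose tracks carry these $2m$ integer encodings in parallel (their convolution), with the finitely many control bits placed in a fixed-length header. Since $\<\Z,+>$ is the prototypical automatic structure, this encoding makes integer equality, order, and the graph of addition all recognizable; regular languages being closed under reversal, one may fix a least-significant-digit-first convention so that addition propagates its carry in finite state while comparison is handled in the same framework.

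First I would check that the domain is a regular language. A string encodes a legal position exactly when it satisfies two kinds of constraint: at most one king of each color is in play, a Boolean condition on the finitely many control bits; and no two live pieces share a square, which for each of the finitely many pairs of pieces asserts that they are not both in play with equal coordinates, i.e. a Boolean combination of control-bit conditions with the integer equalities $x_i=x_j$ and $y_i=y_j$. Each such clause is regular and there are finitely many of them, so the domain is regular. Next I would verify the relations one at a time. The predicate $\WhiteToPlay$ reads only the turn bit and is trivial. The check relations $\WhiteInCheck$ and $\BlackInCheck$ require expressing, for each piece type, when that piece attacks the opposing king: for the king, knight, and pawn the attack condition fixes the coordinate differences $x_i-x_k$ and $y_i-y_k$ to lie in a finite set of constants, which is regular; and for the sliding pieces the crucial point---exactly the feature isolated in the introduction---is that rooks, bishops, and queens travel on the lines $x=c$, $y=c$, $x-y=c$, and $x+y=c$, whose defining equations use only addition. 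Hence ``the attacking piece shares a rank, file, or unit-slope diagonal with the king'' is a conjunction of equalities among sums and differences of coordinates, and ``no live piece lies strictly between them on that line'' is a finite disjunction over the remaining pieces of betweenness conditions built from integer order. All of these are Presburger, hence regular.

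Finally, and this is where I expect the real work to lie, I would treat $\OneMove(p,q)$, reading $p$ and $q$ in convolution so the automaton has simultaneous access to both positions. The definition splits into finitely many cases indexed by which of the $m$ pieces moves and whether the move is a capture, and within each case one asserts: that every non-moving piece keeps its coordinates (an integer equality across $p$ and $q$); that the moving piece's displacement is geometrically legal for its type, using exactly the rank/file/diagonal equations and the non-obstruction conditions already shown regular; that a captured piece, if any, is the one standing on the destination square and is marked captured with default coordinates in $q$; that the turn bit flips; and that the move does not leave the mover's own king in check, which is a check condition on the resulting position and so reduces to the regular relations just discussed. Since $m$ is fixed there are only finitely many cases, each a Boolean combination of regular conditions, so $\OneMove$ is regular. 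The main obstacle throughout is precisely the obstruction (``nothing in between'') clauses for the sliding pieces and the no-self-check clause in $\OneMove$; what makes them manageable is that every geometric constraint arising with these piece types is linear with unit or zero slope, keeping the entire description within Presburger arithmetic and hence within the automatic framework. Note in particular that although a long-range piece has infinitely many legal destinations, the relation $\OneMove$ need not search them: as a binary relation read in convolution the automaton merely verifies the fixed finite relationship between the given $p$ and $q$. Assembling the finitely many automata for the domain and for each relation yields the desired automatic presentation of $\Ch_A$.
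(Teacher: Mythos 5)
Your proposal is correct and follows essentially the same route as the paper's own proof: encode each position as a convolution of signed binary coordinates plus finitely many control bits, verify the domain constraints are regular, reduce the check and move relations to linear (rank/file/diagonal) equations, order, and betweenness for obstruction, and express $\OneMove$ as a finite disjunction over which piece moves. The paper merely organizes the same content through explicit intermediate predicates $\Attack_i$ and $\Move_i$ and phrases regularity in terms of read-only Turing machines, so the two arguments coincide in substance.
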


\begin{proof}
This crucial lemma is the heart of our argument. What it means for a structure to be automatic is that it can be represented as a
collection of strings forming a regular language, and with the predicates also forming regular languages. The functions are replaced with
their graphs and handled as predicates. We refer the reader to \cite{KhoussainovMinnes2010:ThreeLecturesOnAutomaticStructures} for further
information about automatic structures. We shall use the characterization of regular languages as those consisting of the set of strings
that are acceptable to a read-only Turing machine. We shall represent integers $x$ and $y$ with their signed binary representation,
consisting of a sign bit, followed by the binary representation of the absolute value of the integer. Addition and subtraction are
recognized by read-only Turing machines.

Let us discuss how we shall represent the positions as strings of symbols. Fix the finite list $A$ of chess-piece types, and consider all
positions $p$ of type $A$. Let $N$ be the length of $A$, that is, the number of pieces appearing in such positions $p$. We shall represent
the position $p$ using $3N+1$ many strings, with end-of-string markers and padding to make them have equal length. One of the strings shall
be used for the turn indicator. The remaining $3N$ strings shall represent each of the pieces, using three strings for each piece. Recall
that each piece designation of $p$ consists of a tuple $\<i,j,x,y>$, where $i$ is the piece type, $j$ is a binary indicator for whether the
piece is in play or captured and $(x,y)$ are the coordinates of the location of the piece in $\Z\times\Z$. Since we have fixed the list $A$
of piece types, we no longer need the indicator $i$, since the $k^\th$ piece will have the type of the $k^\th$ symbol of $A$. We represent
$j$ with a string consisting of a single bit, and we represent the integers $x$ and $y$ with their signed binary representation. Thus, the
position altogether consists of $3N+1$ many strings. Officially, one pads the strings to the same length and interleaves them together into
one long string, although for regularity one may skip this interleaving step and simply work with the strings on separate tapes of a
multi-tape machine. Thus, every position $p\in\text{Ch}_A$ is represented by a unique sequence of $3N+1$ many strings. We now argue that
the collection of such sequences of strings arising from positions is regular, by recognizing with a read-only multi-tape Turing machine
that they obey the necessary requirements: the turn indicator and the alive/captured indicators are just one bit long; the binary
representation of the locations has the proper form, with no leading zeros (except for the number $0$ itself); the captured pieces display
the correct default location information; and no two live pieces occupy the same square. If all these tests are passed, then the input does
indeed represent a position in $\Ch$.

Next, we argue that the various relations in the language of chess are regular. For this, it will be helpful to introduce a few more
predicates on the collection of strings. From the string representing a position $p$, we can directly extract from it the string
representing the location of the $i^\th$ piece in that position. Similarly, the coding of whose turn it is to play is explicitly given in
the representation, so the relations $\WhiteToPlay(p)$ and $\BlackToPlay(p)$ are regular languages.

Consider the relation $\Attack_i(p,x,y)$, which holds when the $i^\th$ piece of the position represented by $p$ is attacking the square
located at $(x,y)$, represented with signed binary notation, where by attack we mean that piece $i$ could move so as to capture an opposing
piece on that square, irrespective of whether there is currently a friendly piece occupying that square or whether piece $i$ could not in
fact legally move to that square because of a pin condition. We claim that the $\Attack$ relation is a regular language. It suffices to
consider the various piece types in turn, once we know the piece is alive. If the $i^\th$ piece is a king, we check that its location in
$p$ is adjacent to $(x,y)$, which is a regular language because $\set{(c,d)\st |c-d|=1}$, where $c$ and $d$ are binary sequences
representing signed integers, is regular. Similarly, the attack relation in the case of pawns and knights is also easily recognizable by a
read-only Turing machine. Note that for bishops, rooks and queens, other pieces may obstruct an attack. Bishops move on diagonals, and two
locations $(x_0,y_0)$ and $(x_1,y_1)$ lie on the same diagonal if and only if they have the same sum $x_0+y_0=x_1+y_1$ or the same
difference $x_0-y_0=x_1-y_1$, which is equivalent to $x_0+y_1=x_1+y_0$, and since signed binary addition is regular, this is a regular
condition. When two locations $(x_0,y_0)$ and $(x_1,y_1)$ lie on the same diagonal, then a third location $(a,b)$ obstructs the line
connecting them if and only if it also has that sum $a+b=x_0+y_0=x_1+y_1$ or difference $a-b=x_0-y_0=x_1-y_1$, and also has $x_0<a<x_1$ or
$x_1<a<x_0$. This is a regular condition on the six variables $(x_0,y_0,x_1,y_1,a,b)$, because it can be verified by a read-only Turing
machine. The order relation $x<y$ is a regular requirement on pairs $(x,y)$, because one can check it by looking at the signs and the first
bit of difference. So the attack relation for bishops is regular. Rooks move parallel to the coordinate axes, and so a rook at $(x_0,y_0)$
attacks the square at $(x_1,y_1)$, if it is alive and these two squares are different but lie either on the same rank $y_0=y_1$, or on the
same file $x_0=x_1$, and there is no obstructing piece. This is clearly a condition that can be checked by a read-only Turing machine, and
so it is a regular requirement. Finally, the attack relation for queens is regular, since it reduces to the bishop and rook attack
relations.

It follows now that the relation $\WhiteInCheck(p)$, which holds when the position shows the white king in check, is also regular. With a
read-only Turing machine we can clearly recognize whether the white king is indicated as alive in $p$, and then we simply take the
disjunction over each of the black pieces listed in $A$, as to whether that piece attacks the location square of the white king. Since the
regular languages are closed under finite unions, it follows that this relation is regular. Similarly the dual relation $\BlackInCheck(p)$
is regular.

Consider now the $\Move_i(p,x,y)$ relation, similar to the attack relation, but which holds when the $i^\th$ piece in position $p$ is alive
and may legally move to the square $(x,y)$. It should be the correct player's turn; there should be no obstructing pieces; the square at
$(x,y)$ should not be occupied by any friendly piece; and the resulting position should not leave the moving player in check. Each of these
conditions can be verified as above. A minor complication is presented by the case of pawn movement, since pawns move differently when
capturing than when not capturing, and so for pawns one must check whether there is an opposing piece at $(x,y)$ if the pawn should move
via capture.

Consider next the relation $\OneMove_i(p,q)$, which holds when position $p$ is transformed to position $q$ by a legal move of piece $i$.
With a read-only Turing machine, we can easily check that position $p$ indicates that it is the correct player's turn, and by the relations
above, that piece $i$ may legally move to its location in $q$, that any opposing piece occupying that square in $p$ is marked as captured
in $q$, and that none of the other pieces of $p$ are moved or have their capture status modified in $q$. Thus, this relation is a regular
language.

Finally, we consider the relation $\OneMove(p,q)$ in $\Ch_A$, which holds when position $p$ is transformed to position $q$ by a single
legal move. This  is simply the disjunction of $\OneMove_i(p,q)$ for each piece $i$ in $A$, and is therefore regular, since the regular
languages are closed under finite unions.

Thus, we have established that the domain of $\Ch_A$ is regular and all the predicates in the restricted language of chess are regular, and
so the structure $\Ch_A$ is automatic, establishing the lemma. \hfill\QEDbox
\end{proof}

We now complete the proof of the main theorem. Since the structure $\Ch_A$ is automatic, it follows by the decidability theorem of
automatic structures \cite{KhoussainovNerode1995:AutomaticPresentationsOfStructures,BlumensathGradel2000:AutomaticStructures} that the
first-order theory of this structure is uniformly decidable. In particular, since the mate-in-$n$ question is expressible in this
structure---and we may freely add constant parameters---it follows that the mate-in-$n$ question is uniformly decidable: there is a
computable algorithm, which given a position $p$ and a natural number $n$, determines yes-or-no whether a designated player can force a win
in at most $n$ moves from position $p$. Furthermore, in the case that the position $p$ is mate-in-$n$ for the designated player, then there
is a computable strategy providing optimal play: the designated player need only first find the smallest value of $n$ for which the
position is mate-in-$n$, and then search for any move leading to a mate-in-$(n-1)$ position. This value-reducing strategy achieves victory
in the fewest possible number of moves from any finite-value position. Conversely, there is a computable strategy for the losing player
from a mate-in-$n$ position to avoid inadvertantly reducing the value on a given move, and thereby delay the checkmate as long as possible.
Indeed, if a given position $p$ is not mate-in-$n$, then we may computably find moves for the opposing player that do not inadvertantly
result in a mate-in-$n$ position. Finally, we observe that the stalemate-in-$n$ and draw-in-$n$-by-$k$-repetition problems are similarly
expressible in the structure $\Ch_A$, and so these are also decidable and admit computable strategies to carry them out. This completes the
proof of the main theorem. \hfill \QEDbox

\medskip

An essentially similar argument shows that the structure of chess $\Ch_A$, for any piece specification $A$, is definable in Presburger
arithmetic $\<\mathbb{N},+>$. Specifically, one codes a position with a fixed length sequence of natural numbers, where each piece is
represented by a sequence of numbers indicating its type, whether it is still in play, and its location (using extra numbers for the sign
bits). The point is that the details of our arguments in the proof of the main theorem show that the attack relation and the other
relations of the structure of chess are each definable from this information in Presburger arithmetic. Since Presburger arithmetic is
decidable, it follows that the theory of $\Ch_A$ is also decidable.

We should like to emphasize that our main theorem does not appear to settle the decidability of the winning-position problem, the problem
of determining whether a designated player has a winning strategy from a given position. The point is that a player may have a winning
strategy from a position, without there being any finite bound on the number of moves required. Black might be able to delay checkmate any
desired finite amount, even if every play ends in his loss, and there are positions known to be winning but not mate-in-$n$ for any $n$.
These are precisely the positions with infinite game value in the recursive assignment of ordinal values to winning positions:  already-won
positions for white have value $0$; if a position is white-to-play, the value is the minimum of the values of the positions to which white
may play, plus one; if it is black-to-play, and all legal plays have a value, then the value is the supremum of these values. The winning
positions are precisely those with an ordinal value, and this value is a measure of the distance to a win. A mate-in-$n$ position, with $n$
minimal, has value $n$. A position with value $\omega$ has black to play, but any move by black will be mate-in-$n$ for white for some $n$,
and these are unbounded. The omega one of chess, denoted $\omega_1^{\rm chess}$, is defined in
\cite{EvansHamkinsWoodin:TransfiniteGameValuesInInfiniteChess} to be the supremum of the values of the finite positions of infinite chess.
The exact value of this ordinal is an open question, although an argument of Blass appearing in
\cite{EvansHamkinsWoodin:TransfiniteGameValuesInInfiniteChess} establishes that $\omega_1^{\rm chess}\leq\omega_1^{ck}$, as well as the
accompanying fact that if a player can win from position $p$, then there is a winning strategy of hyperarithmetic complexity.  Although we
have proved that the mate-in-$n$ problem is decidable, we conjecture that the general winning-position problem is undecidable and indeed,
not even arithmetic.

Consider briefly the case of three-dimensional infinite chess, as well as higher-dimensional infinite chess. Variants of three-dimensional
(finite) chess arose in the late nineteenth century and have natural infinitary analogues. Without elaborating on the details---there are
various reasonable but incompatible rules for piece movement---we remark that the method of proof of our main theorem works equally well in
higher dimensions.
\begin{corollary}\label{Corollary.MateInNforHigherDimensions}
The mate-in-$n$ problem for $k$-dimensional infinite chess is decidable.
\end{corollary}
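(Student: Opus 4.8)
The plan is to adapt the proof of the Main Theorem, essentially verbatim, by checking that every step which relied on the two-dimensionality of the board goes through in $k$ dimensions. The overall strategy is unchanged: fix a finite piece specification $A$, represent the positions of $k$-dimensional infinite chess as strings, exhibit the restricted language of chess on $\Ch_A$ as an automatic structure, observe that the mate-in-$n$ problem remains expressible in this structure with the same $\Sigma_{2n}\vee\Pi_{2n}$ formulas, and then invoke the decidability theorem for automatic structures.

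First I would set up the representation. A piece location is now a point $(x_1,\ldots,x_k)\in\Z^k$, so each piece designation uses $k$ signed-binary strings for its coordinates rather than two; a position of type $A$ with $N=|A|$ pieces is coded by $(k+1)N+1$ strings, one for the turn indicator, one alive/captured bit per piece, and $k$ coordinate strings per piece. Exactly as before, a read-only multi-tape Turing machine recognizes which string-tuples represent genuine positions (single-bit indicators, signed-binary coordinates with no leading zeros, correct default data for captured pieces, and no two live pieces sharing a square), so the domain of $\Ch_A$ is regular.

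Next I would re-examine the $\Attack$, $\Move$, and $\OneMove$ relations. The crucial observation is that in any reasonable $k$-dimensional rules, the distance pieces still travel on straight lines whose defining equations use only addition, subtraction, and the order relation on coordinates---exactly the operations that the excerpt has already shown to be regular on signed-binary strings. A rook-like move fixes all but one coordinate; a bishop-like diagonal move is captured by equalities among coordinate sums and differences; obstruction along such a line is again a betweenness condition on one coordinate together with the collinearity equations; and king, knight, and pawn attacks remain bounded-offset conditions. Each of these is a regular condition on finitely many signed-binary strings, verifiable by a read-only machine, so $\Attack_i$, and hence $\WhiteInCheck$, $\BlackInCheck$, $\Move_i$, and $\OneMove$, are all regular by finite unions, making $\Ch_A$ automatic in $k$ dimensions.

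Finally, since the mate-in-$n$ expressibility argument used only the abstract relations $\WhiteToPlay$, $\OneMove$, and the in-check predicates---and not the dimension of the board---the formulas $\WhiteWins_n(p)$ and $\BlackWins_n(p)$ are available unchanged, with the same complexity, and the decidability theorem for automatic structures yields the corollary. The only genuine point requiring care---the step I expect to be the main obstacle, though I anticipate it is surmountable---is the phrase \emph{various reasonable but incompatible rules for piece movement}: one must confirm that whatever convention is adopted for how bishops, rooks, and queens (and their higher-dimensional analogues) slide through $\Z^k$ still describes lines and hyperplanes cut out by linear equations with integer coefficients expressible using addition alone, so that the attack and obstruction relations stay within the Presburger-definable, hence regular, fragment. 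As long as the chosen movement rules are affine-linear in this sense, the entire argument transfers, and the corollary follows. \hfill\QEDbox
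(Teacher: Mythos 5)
Your proposal is correct and follows exactly the route the paper intends: the paper's own ``proof'' of this corollary is just the one-line remark that the method of the main theorem works equally well in higher dimensions, and you have filled in precisely the details that remark presupposes (extra coordinate strings per piece, and the observation that sliding-piece movement and obstruction remain Presburger-expressible, hence regular, in $\Z^k$). Your closing caveat about choosing affine-linear movement conventions matches the paper's own hedge about ``various reasonable but incompatible rules for piece movement.''
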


Results in \cite{EvansHamkinsWoodin:TransfiniteGameValuesInInfiniteChess} establish that the omega one of infinite positions in
three-dimensional infinite chess is exactly true $\omega_1$; that is, every countable ordinal arises as the game value of an infinite
position of three-dimensional infinite chess.

We conclude the article with a solution to the chess problem we posed in figure \ref{Figure.MateIn12}. With white Qe5+, the black king is
forced to the right kg4, and then white proceeds in combination Rg3+ kh4 Qg5+ ki4, rolling the black king towards the white king, where
checkmate is delivered on white's $13^\th$ move. Alternatively, after Qe5+ kg4, white may instead play Ks4, moving his king to the left,
forcing the two kings together from that side, and this also leads to checkmate by the queen on the $13^{th}$ move. It is not possible for
white to checkmate in 12 moves, because if the two kings do not share an adjacent square, there is no checkmate position with a king, queen
and rook versus a king. Thus, white must force the two kings together, and this will take at least 12 moves, after which the checkmate move
can now be delivered, meaning at least 13 moves.\footnote{C. D. A. Evans (US national master) confirms this mate-in-13-but-not-12
analysis.} However, white can force a stalemate in 12 moves, by moving Qe5+, and then afterwards moving only the white king towards the
black king, achieving stalemate on the 12th move, as the black king is trapped on f4 with no legal move. White can force a draw by
repetition in $3$ moves, by trapping the black king in a $4\times 4$ box with the white queen and rook at opposite corners, via  Qe5+ kg4
Ri3 kh4 Qf6+, which then constrains the black king to two squares.

\bibliographystyle{alpha}
\bibliography{MathBiblio,HamkinsBiblio}

\end{document}